\newtheorem{thm}{Theorem}
\newtheorem{lem}[thm]{Lemma}
\newtheorem{cor}[thm]{Corollary}
\newtheorem{con}[thm]{Conjecture}
\numberwithin{thm}{section}
\newcommand{\N}{\mathbb{N}}
\def\L{\mathcal{L}}
\def\C{\mathcal{C}}
\def\O{\mathcal{O}}
\title{Towards a Complete Local-Global Principle}
\author{Nikola Kuzmanovski}
\date{\vspace{-5ex}}
\begin{document}
\maketitle
\begin{abstract}
	Ahlswede and Cai proved that if a simple graph has nested solutions (NS) under the edge-isoperimetric problems, 
	and the lexicographic (lex) order produces NS for its second cartesian power,
	then the lex order produces NS for any finite cartesian power.
	Under very general assumptions, 
	we prove that if a graph and its second cartesian power have NS,
	then so does any finite cartesian power.
	Harper asked if this is true without any restriction.
	We also conjecture that it is.
	All graphs studied in the literature for which the lex order is optimal are regular.
	This lead Bezrukov and Elsässer to conjecture that if the lex order is optimal for the second cartesian power,
	then the original graph is regular.
	A counterexample to this conjecture is provided.
\end{abstract}

\section{Introduction}

Discrete isoperimetric inequalities have been studied for applications to pure mathematics and the sciences,
and for their own sake.
Harper solved the edge-isoperimetric problem on hypercubes \cite{HarperEIP1964} in order to partially prove Posner's wirelength conjecture.
Lindsay \cite{LindseyEIP1964} solved the problem on Hamming graphs and completely settled Posner's conjecture.
Harper's result has been rediscovered multiple times \cite{Bernstein1967, Clements1971, HartEIP1976, KleitmanKriegerRothschild1971}
In more recent times,
the solutions to the edge-isoperimetric problem on the Petersen graph was motivated by applications to multiprocessing computers.
The literature on applications is vast,
we point the reader to \cite{Bezrukov1999,HarperBook2004} for more details.

It is known \cite{EngelBook1997, HarperBook2004} that the Kruskal-Katona Theorem \cite{Katona1966, Kruskal1963} implies Harper's Theorem.
This connection holds in general for Macaulay posets and corresponding graphs \cite{Bezrukov1999Equiv}.
In \cite{BezrukovPiotrowskiPfaff2004} this connection was used to study Macaulay posets.

In this paper, we consider two graphs $G=(V_G,E_G)$ and $H=(V_H,E_H)$.
All graphs are simple.
Indices will be omitted when the graphs are clear from context.
For $A,B\subseteq V_G$ we define:
\begin{align*}
	I_G(A,B)&=\{\{u,v\}\in E_G\bigm| u\in A,\; v\in B\}, \\ 
	I_G(A)&= I_G(A,A), \\
	I_G(m)&= \max_{A\subseteq V_G,\; |A|=m} |I_G(A)|, \\
	\Theta_G(A) &= \{ \{u,v\} \in E_G \bigm | u\in A, v\not \in A\},\\
	\Theta_G (m) &= \min_{A\subseteq V_G,\; |A|=m} |\Theta_G(A)|.
\end{align*}
There are two classical \textit{edge-isoperimetric problems}.

\textit{The Boundary Problem:} for $m\in [|V_G|]$, 
find $A\subseteq V_G$ with $|A|=m$ and $|\Theta(A)| = \Theta(m)$.

\textit{The Induced Edges Problem:} for $m\in [|V_G|]$, 
find  $A\subseteq V_G$ with $|A|=m$ and $|I(A)|=I(m)$.

We call such sets $A$ \textit{optimal}. 
For regular graphs these two problems are equivalent,
which follows by the folklore result below (a proof can be found in \cite{HarperBook2004}).

\begin{lem}
	If $G=(V,E)$ is a regular graph with degree $n$ and $A\subseteq V$ then
	\begin{eqnarray*}
		|\Theta(A)| + 2|I(A)| = n|A|.
	\end{eqnarray*}
\end{lem}

The \textit{Cartesian product} of $H$ and $G$ is the graph $H\square  G$,
with the vertex set $V_H\times V_G$, 
whose two vertices
$(x,y)$ and $(u,v)$ are adjacent iff either $x=u$ and $\{y,v\}\in E_G$, or
$\{x,u\}\in E_H$ and $y=v$. 
The graph $G^d=G\square G\square\cdots \square G$ ($d$ times) is called the $d^{th}$ {\em Cartesian power} of $G$.

We say that $G$ has \textit{nested solutions (NS)}  if there are sets $A_1\subseteq A_2\subseteq \cdots \subseteq A_{|V|}$, 
such that for all $i\in \{1,\dots, |V|\}$ we have that $|A_i|=i$ and $A_i$ is optimal in the induced edges problem.
Notice that the NS property forces a total order on $V$.
Thus, if a graph has NS,
then without loss of generality we assume $V=\{0,1,\dots, |V|-1\}$, 
where each $\{0,1,\dots, k-1\}$ is optimal for $k\leq |V|$.
We call $\{0,1,\dots, k-1\}$ an \textit{initial segment} of size $k$ in $G$.
Total orders on $V$ that emerge from NS are called \textit{optimal orders}.

\textit{Lexicographic order}, $\L$ on $n$-tuples with integer entries,
is defined such that $(x_1,\dots,x_n) <_{\L} (y_1,\dots,y_n)$ 
iff there exists an index $i$, $1\leq i\leq n$, 
such that $x_j=y_j$ for $j<i$ and $x_i< y_i$.

{\it Colexicographic order}, $\C$ on $n$-tuples with integer entries,
is defined such that $(x_1,\dots, x_n) <_{\C} (y_1,\dots, y_n)$
iff there exists an index $i$, $1\leq i \leq n$, 
such that $x_j=y_j$ for $j> i$ and $x_i < y_i$.

These orders are important to mathematics.
They are involved in the solutions of many discrete extremal problems.
They are also important to other areas of mathematics and studying them is of great importance.

Macaulay's lex ideal theorem \cite{Macaulay1927} uses the lexicographic order to give lower bounds on Hilbert functions in polynomial rings.
This theorem eventually led to the discovery of Gröbner basis and the rise of computational commutative algebra.
Many generalization have been given in different rings \cite{Mermin2006, MerminMurai2010, MerminPeeva2006, MerminPeeva2007}.
Recently, the theory of Hilbert functions was linked with the theory of Macaulay posets \cite{KuzmanovskiMacaulay2023} .
It would be interesting to see results on Hilbert functions deduced from edge-isoperimetric inequalities in the future,
since edge-isoperimetric inequalities were already used to study Macaulay posets in \cite{BezrukovPiotrowskiPfaff2004}.

\begin{thm}[Ahlswede-Cai \cite{AhlswedeCai1997_II}, 1997]\label{lgp}
	If $G$ has NS and $\L$ is optimal for $G^2$ then $\L$ is optimal for
	$G^n$ for any $n\geq 3$.
\end{thm}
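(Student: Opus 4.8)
The plan is to induct on $n$, taking the Ahlswede--Cai hypotheses ($G$ has NS, and $\L$ is optimal for $G^2$) as the base, and proving that if $\L$ is optimal for every power $G^k$ with $2\le k\le n-1$, then it is optimal for $G^n$. I fix the optimal order $V=\{0,1,\dots,|V|-1\}$ coming from the NS of $G$, and present $G^n=G\,\square\,G^{n-1}$, treating the lone $G$-coordinate as the most significant one so that it matches $\L$. For $A\subseteq V^n$ I decompose it into fibers $A_i=\{\mathbf z\in V^{n-1}\mid (i,\mathbf z)\in A\}$ over the first coordinate and record the basic identity
\[
|I_{G^n}(A)|=\sum_{i\in V}|I_{G^{n-1}}(A_i)|+\sum_{\{i,j\}\in E_G}|A_i\cap A_j|,
\]
in which the first sum counts edges inside fibers and the second counts edges across fibers joined by an edge of $G$.

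First I would compress. Replacing each fiber $A_i$ by the $\L$-initial segment of $V^{n-1}$ of the same size can only increase the first sum, by the inductive hypothesis that $\L$ is optimal for $G^{n-1}$; and since initial segments are nested, afterwards $|A_i\cap A_j|=\min(|A_i|,|A_j|)$, the largest possible intersection for sets of those sizes, so the second sum cannot decrease either. Hence it suffices to maximize, over size vectors $\mathbf a=(a_0,\dots,a_{|V|-1})$ with $0\le a_i\le N:=|V|^{n-1}$ and $\sum_i a_i=m$, the quantity $\Phi_n(\mathbf a)=\sum_{i\in V}I_{G^{n-1}}(a_i)+\sum_{\{i,j\}\in E_G}\min(a_i,a_j)$. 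The next step is a layer-cake rewriting: writing $\delta(u)=I_{G^{n-1}}(u+1)-I_{G^{n-1}}(u)$ for the marginal of the lex order on $G^{n-1}$ and $S_t=\{i\in V\mid a_i\ge t\}$ for the level sets (a nested chain $S_1\supseteq\cdots\supseteq S_N$ with $\sum_t|S_t|=m$), both sums telescope threshold by threshold into
\[
\Phi_n(\mathbf a)=\sum_{t=1}^{N}\Big(\delta(t-1)\,|S_t|+|I_G(S_t)|\Big).
\]
For a fixed size the summand is maximized by taking $S_t$ to be an initial segment of $G$ (the NS hypothesis), and initial segments of non-increasing sizes are automatically nested, so the problem decouples into the one-dimensional task of choosing a non-increasing $(s_t)_{t=1}^N$ in $\{0,\dots,|V|\}$ with $\sum_t s_t=m$ maximizing $\sum_{t}\big(\delta(t-1)s_t+I_G(s_t)\big)$.

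It remains to show the optimizing sequence is the one coming from $\L$, which a direct computation shows is the flat sequence $(s_t)$ whose entries differ by at most one (equivalently, the $\L$-initial segment of $V^n$ fills whole fibers in the order of $V$ and leaves at most one partial fiber). This is the crux, and the main obstacle. The same reduction applied to $G^2$ turns the hypothesis ``$\L$ is optimal for $G^2$'' into exactly the statement that the flat sequence is optimal for the one-dimensional problem associated to $G$, with $\delta$ replaced by $\delta_1(u)=I_G(u+1)-I_G(u)$ and with $|V|$ thresholds in place of $N$. Thus the theorem reduces to transferring flatness-optimality from the $(\delta_1,|V|)$ instance to the $(\delta,N)$ instance. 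The difficulty is genuine: the local exchange that flattens two entries $s_t\ge s_{t'}+2$ with $t<t'$ changes the objective by
\[
\big(\delta(t'-1)-\delta(t-1)\big)+\big(\delta_1(s_{t'})-\delta_1(s_t-1)\big),
\]
a difference in which the marginal of $G^{n-1}$ and the curvature of $I_G$ pull in opposite directions, so no blunt convexity or rearrangement argument closes it. The plan is to control the lex marginals $\delta$ of $G^{n-1}$ in terms of those of $G$ --- again using $G^n=G\,\square\,G^{n-1}$ recursively, so that the sign of every such exchange is certified by the corresponding $G^2$-level comparison --- and thereby bootstrap the flatness condition from the single hypothesis on $G^2$ up to all powers. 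I expect verifying this comparison of marginals, uniformly in $t$ and in the budget $m$, to be the technical heart of the argument.
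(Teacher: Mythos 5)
This theorem is not proved in the paper at all --- it is quoted from Ahlswede--Cai \cite{AhlswedeCai1997_II} and used as a black box --- so the only question is whether your reconstruction stands on its own. It does not. Your reduction is sound and is essentially the standard one: the fiber decomposition of $G^n=G\,\square\,G^{n-1}$, compression of each fiber to a lex initial segment using the inductive hypothesis, the observation that nested fibers maximize the cross terms, and the layer-cake identity $\Phi_n(\mathbf a)=\sum_{t}\bigl(\delta(t-1)|S_t|+|I_G(S_t)|\bigr)$ with $S_t$ replaced by initial segments of $G$ are all correct, and they correctly reduce the theorem to a one-dimensional statement: the ``flat'' non-increasing sequence $(s_t)$ is optimal for the functional $\sum_t\bigl(\delta(t-1)s_t+I_G(s_t)\bigr)$ built from the lex marginals $\delta=\delta_{G^{n-1}}$.

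But that one-dimensional statement \emph{is} the local-global principle; it is not a technicality left to check. As you yourself compute, the local exchange changes the objective by $\bigl(\delta(t'-1)-\delta(t-1)\bigr)+\bigl(\delta_1(s_{t'})-\delta_1(s_t-1)\bigr)$, and neither summand has a definite sign because $\delta_{G^{n-1}}$ is not monotone. Your proposal ends with a plan (``control the lex marginals of $G^{n-1}$ in terms of those of $G$ \dots\ and thereby bootstrap'') and an expectation that this will work, not an argument. The missing ingredient is precisely the content of Ahlswede--Cai's paper: an explicit description of $\delta_{G^{n-1}}$ under lex (via the identity $|I(A)|=\sum_{(x,y)\in A}(\delta_H(x)+\delta_G(y))$ for compressed sets, quoted in Section~\ref{uniquenessSection}) together with a delicate induction showing that every exchange needed at level $n$ is certified by an inequality already contained in the $G^2$ hypothesis; this occupies several pages in \cite{AhlswedeCai1997_II} and does not follow from any convexity or rearrangement principle, as you note. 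So the proposal is a correct and well-motivated reduction with the crux left unproved; as a proof of Theorem~\ref{lgp} it has a genuine gap.
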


In \cite{AhlswedeCai1997_II}, 
Theorem \ref{lgp} was stated using generalized edge-isoperimetric functions,
but it has been mostly used for graphs.
Notice that Theorems \ref{lgp} holds if $\L$ is replaced with $\C$.
Ahlswede and Cai called Theorem \ref{lgp} a local-global principle.
The local-global principle was first generalized by Harper \cite{HarperBook2004}
to allow different graphs in the product.
Recently, in \cite{BezrukovKuzmanovskiLim2023} it was shown that the local-global principle holds for many different orders.

For $1 \leq m \leq |V|$ we define the $m$-th $\delta$-\textit{entry} of $G$ by $\delta_G(m) = I_G(m) - I_G(m-1)$, 
and we define the $\delta$\textit{-sequence} of $G$ to be $\delta_G = (\delta_G(1),\delta_G(2),\dots,\delta_G(|V|))$.
Some examples of $\delta$-sequences are:
\begin{align*}
	\delta_{K_n} &= (0,1,2,\dots, n-1)\\
	\delta_{\text{tree on $n$ vertices}} &= (0,1, 1, 1,\dots, 1)\\
	\delta_{\text{Petersen}} &= (0,1,1,1,2,1,2,2,2,3).
\end{align*}

\begin{lem}\label{lem1}
	If $G$ has NS
	then $\delta(i+1) -\delta(i)\leq 1$ for all $i\in \{1,\dots ,|V| - 1\}$.
\end{lem}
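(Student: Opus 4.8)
The plan is to read the $\delta$-entries off the nested structure directly and then run a single exchange argument against optimality. Since $G$ has NS, I may assume $V=\{0,1,\dots,|V|-1\}$ with each initial segment $A_k=\{0,\dots,k-1\}$ optimal, so $I(k)=|I(A_k)|$. The first step is to reinterpret $\delta$: passing from $A_{k-1}$ to $A_k$ only adjoins the vertex $k-1$, and the edges gained are exactly those joining $k-1$ to $\{0,\dots,k-2\}$. Hence
\[
	\delta(k)=I(k)-I(k-1)=|I(A_k)|-|I(A_{k-1})|=\deg_{A_k}(k-1),
\]
the ``back-degree'' of vertex $k-1$, i.e.\ the number of its neighbours lying below it in the optimal order. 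In particular $\delta(i)=\deg_{A_i}(i-1)$ and $\delta(i+1)=\deg_{A_{i+1}}(i)$, so the target inequality is a comparison of two consecutive back-degrees.

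The key step is to delete the \emph{second-largest} vertex from the optimal set $A_{i+1}=\{0,\dots,i\}$. Set $B=A_{i+1}\setminus\{i-1\}=\{0,\dots,i-2,i\}$, which has size $i$. Deleting $i-1$ destroys exactly the edges incident to $i-1$ inside $A_{i+1}$, and since $A_{i+1}=A_i\cup\{i\}$ the only neighbour of $i-1$ outside $A_i$ that can appear is the top vertex $i$. Thus
\[
	\deg_{A_{i+1}}(i-1)=\deg_{A_i}(i-1)+[\{i-1,i\}\in E]=\delta(i)+[\{i-1,i\}\in E].
\]
Combining with $|I(A_{i+1})|=I(i)+\delta(i+1)$ gives $|I(B)|=I(i)+\delta(i+1)-\delta(i)-[\{i-1,i\}\in E]$.

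Finally I invoke optimality: $B$ has size $i$, so $|I(B)|\le I(i)$, and rearranging yields $\delta(i+1)-\delta(i)\le[\{i-1,i\}\in E]\le 1$, which is precisely the claim; the degenerate case $i=1$ (where $A_{i-1}=\emptyset$ and $\delta(1)=0$) is checked directly and causes no trouble. I do not expect a serious obstacle here, as the argument is short. The one thing to get right is the degree bookkeeping in the middle step: the whole point is that deleting $i-1$ rather than an arbitrary vertex isolates a single ``extra'' edge — the one to the top vertex $i$ — which is exactly what produces the additive slack of $+1$. Deleting an arbitrary vertex instead would only show that every optimal initial segment has induced minimum degree at least its terminal $\delta$-value, which is true but too weak to pin down the sharp increment bound.
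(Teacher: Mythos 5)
Your proof is correct. The paper states this lemma without proof (it is a folklore fact in the edge-isoperimetric literature), and your argument --- delete the second-largest vertex $i-1$ from the optimal set $A_{i+1}$, observe that this destroys exactly $\delta(i)+[\{i-1,i\}\in E]$ induced edges, and compare the resulting $i$-set against $I(i)$ --- is precisely the standard one; the bookkeeping, including the $i=1$ case, checks out.
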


This result tells us that $\delta_G$ can be partitioned into (strictly) increasing monotonic segments. From our previous examples, $\delta_{K_n}$ has one monotonic segment,
$\delta_{\text{Petersen}}$ has $6$ monotonic segments 
and $\delta_{\text{tree on $n$ vertices}}$ has $n-1$ monotonic segments. 
By $M_{G,i}$ with $i\geq 1$ denote the set of vertices
corresponding to the entries of the $i$-th monotonic segment of $\delta_G$. 

\begin{align*}
	s_{G,i} &= \delta_G \left( 1 + \sum_{k=1}^{i-1}|M_{G,k}|\right),%\\
\end{align*}
Informally, $s_i$ is starting value of the $i$-th monotonic segment.
Observe that $\delta(1)=s_1=0$.
We say that $G$ is $\delta$-\textit{dense} iff $s_{G,2},\dots, s_{G,r} > 1$.
Note that $K_n$ is $\delta$-dense,
but the Petersen graph and any tree on at least three vertices are not.
Our main result is the following theorem.

\begin{thm}\label{abstractLGP}
	Suppose that $G$ is $\delta$-dense.
	If $G$ and $G^2$ have NS then so does $G^d$ for all $d\geq 3$.
\end{thm}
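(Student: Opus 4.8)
The plan is to reduce the statement to the Ahlswede--Cai principle (Theorem \ref{lgp}). Since that theorem, together with the remark that it remains valid with $\L$ replaced by $\C$, promotes optimality on $G^2$ to optimality on every $G^d$, it suffices to prove the following local claim: if $G$ is $\delta$-dense and both $G$ and $G^2$ have NS, then $\L$ (equivalently $\C$) is already optimal for $G^2$. Once one of these orders is known to be optimal for $G^2$, its initial segments are nested and optimal in every power, and each $G^d$ with $d\geq 3$ inherits NS.

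To analyze $G^2$ I would decompose along the last coordinate. Writing a set $A\subseteq V_G\times V_G$ as a union of fibers $A_v=\{x:(x,v)\in A\}$ indexed by $v\in V_G$, one has the identity
\[
	I_{G^2}(A)=\sum_{v\in V_G} I_G(A_v)+\sum_{\{v,v'\}\in E_G}|A_v\cap A_{v'}|.
\]
Using that $G$ has NS, a compression step replaces each fiber $A_v$ by the initial segment of $G$ of the same size: this does not decrease the first sum (initial segments are optimal in $G$) and, because all fibers then become initial segments of one fixed order, it maximizes each intersection, turning the second sum into $\sum_{\{v,v'\}}\min(|A_v|,|A_{v'}|)$. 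Hence $I_{G^2}(m)=\max_{n_\bullet} F(n_\bullet)$, where the maximum runs over compositions $n_\bullet=(n_v)_{v\in V_G}$ of $m$ with $0\le n_v\le |V_G|$ and
\[
	F(n_\bullet)=\sum_{v\in V_G} I_G(n_v)+\sum_{\{v,v'\}\in E_G}\min(n_v,n_{v'}).
\]

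The heart of the argument is to show that the \emph{staircase} composition, which fills the fibers one at a time following the optimal order of $G$ (this is the initial segment of $\L$, or of $\C$, depending on which coordinate indexes the fibers), attains this maximum for every $m$. The hypothesis that $G^2$ has NS supplies a nested family of optimal compositions, and the goal is a straightening lemma: any optimal composition can be transformed into the staircase by exchange moves that shift a unit from a lower fiber to complete a higher one, without decreasing $F$. Each such move trades a marginal loss $\delta_G$ at the donor fiber for a marginal gain $\delta_G$ at the recipient fiber together with a change in the $\min$-terms, and here Lemma \ref{lem1} and $\delta$-density enter decisively: Lemma \ref{lem1} tells us that $\delta_G$ climbs in unit steps within each monotonic segment, while $\delta$-density guarantees that every segment after the first starts at a value exceeding $1$. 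This lower bound on the marginal gains of building up a fiber is exactly what keeps each exchange non-negative; without it (as for trees or the Petersen graph) a drop of $\delta_G$ to $0$ or $1$ can make spreading strictly better than the staircase and destroy optimality of $\L$.

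I expect the straightening lemma to be the main obstacle, for two reasons. First, the interaction term rewards balanced fibers while the convex runs of $\delta_G$ reward concentration, and reconciling this tension requires a careful, segment-by-segment accounting of each exchange rather than a single global inequality; the many ties already visible in $K_3\square K_3$ show that one cannot hope for a uniqueness statement and must instead exhibit the staircase as one distinguished optimum among several. Second, one must verify that the straightening can be performed compatibly for all $m$ at once, so that the chosen optimal staircases form a single nested chain; this is precisely where the NS hypothesis on $G^2$ is genuinely used, since $\delta$-density of $G$ alone does not force $G^2$ to have NS. With the straightening lemma in hand, the staircase, and hence $\L$ (or $\C$), is optimal for $G^2$, and Theorem \ref{lgp} completes the proof.
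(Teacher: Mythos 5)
Your top-level reduction is the same as the paper's: show that $\L$ (or $\C$) is optimal for $G^2$ and then invoke Theorem \ref{lgp}. The fiber identity and the one-coordinate compression reducing $I_{G^2}(m)$ to $\max_{n_\bullet} F(n_\bullet)$ are also correct. But everything the theorem actually requires beyond that reduction is concentrated in your ``straightening lemma,'' and you do not prove it --- you only describe the tension it must resolve and flag it yourself as the main obstacle. As written, this is a plan rather than a proof.

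There is also a structural problem with the plan. If the straightening lemma held in the form you state --- every optimal composition can be moved to the staircase without decreasing $F$, using only Lemma \ref{lem1} and $\delta$-density of $G$ --- it would show that $\L$ is optimal for $G^2$ assuming only that $G$ has NS and is $\delta$-dense, with no hypothesis on $G^2$ at all; the ``compatibility across $m$'' issue you reserve for the NS hypothesis on $G^2$ is vacuous, since staircases for different $m$ are nested by construction. That stronger statement is not what the paper proves, and nothing in your sketch makes it plausible; the hypothesis that $G^2$ has NS must enter the combinatorial core, not just the bookkeeping. The paper uses it as follows: NS of $G^2$ plus the compression lemma yields a nested chain of compressed optimal sets $A_1\subseteq\cdots\subseteq A_{|V_G|^2}$; after observing that $A_2$ must be $\{(0,0),(0,1)\}$ or $\{(0,0),(1,0)\}$, one takes the minimal $k$ at which the chain deviates from $\L$, uses compressedness together with the nesting $A_{k-1}\subseteq A_k$ to pin down the single added vertex, and exhibits a competing set whose advantage is a difference of two $\delta$-entries that $\delta$-density makes strictly positive --- contradicting optimality of $A_k$ (or of a slightly later $A_s$ in the case $y=0$). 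This establishes that any compressed optimal order on $G^2$ is $\L$ or $\C$ (Theorem \ref{uniqueness}), which is what feeds into Theorem \ref{lgp}. To salvage your approach you would need to run your exchange argument on the nested optimal chain supplied by the NS hypothesis on $G^2$, rather than on an arbitrary optimal composition for a single $m$.
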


Many results in the literature that solve the problem for $G^2$ (when it is not trivial) assume that $G$ is regular\cite{AhlswedeCai1997_II, BezrukovBulatovicKuzmanovski2018, BezrukovDasElsasser2000, BezrukovElsasser2003, Carlson2002}.
This led to the following conjecture.

\begin{con}\label{LexReg}
	{\rm (Bezrukov-Elsässer \cite{BezrukovElsasser2003}, 2003)}
	If $\L$ is optimal for $G^2$ then $G$ is regular.
\end{con}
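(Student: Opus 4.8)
The plan is to prove the contrapositive of the stated implication: if $G$ is not regular, then $\L$ does not produce nested solutions for $G^2$. First I would record the reduction that lets me speak of an order on $V$ at all. If $\L$ is optimal for $G^2$, then for every $m\le |V|$ the lex-initial segment of size $m$ lies inside the single row $\{0\}\times V$, so its induced edges are exactly those of the corresponding vertex set in $G$; optimality in $G^2$ then forces that vertex set to be optimal in $G$ (after checking, by a flattening/compression argument, that a size-$m\le|V|$ maximizer in $G^2$ may be taken inside one copy of $G$). Hence $G$ has NS. I fix the resulting optimal order $V=\{0,1,\dots,N-1\}$, write $d_i=\deg(i)$, and record the row-by-row description of $\L$: adding the vertex $(q,j)$ to a lex-initial segment creates $\delta_G(q+1)+\delta_G(j+1)$ new induced edges, the first summand counting horizontal edges into the full rows $0,\dots,q-1$ and the second counting vertical edges inside row $q$.

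The engine of the argument is the elementary handshake identity, valid for every graph and every $A$,
\begin{equation*}
	\sum_{v\in A}\deg(v)=2|I(A)|+|\Theta(A)|,
\end{equation*}
together with $\deg_{G^2}(x,y)=\deg_G(x)+\deg_G(y)$. When $G$ is $n$-regular the left-hand side over $A\subseteq V\times V$ equals the constant $2n|A|$, so maximizing $|I_{G^2}(A)|$ coincides with minimizing $|\Theta_{G^2}(A)|$ and the classical machinery applies. When $G$ is \emph{not} regular the two problems decouple: maximizing induced edges now rewards sets that are simultaneously tightly packed and rich in high-degree vertices. The strategy is to convert this decoupling into an explicit set that beats $\L$.

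Concretely, assuming $G$ is not regular I would locate two vertices with $d_i<d_j$ and choose a size $m$ at which the lex-initial segment $L_m$ contains a boundary occurrence of the lower-degree coordinate while excluding a placement of the higher-degree one preserving the same internal adjacencies; swapping them keeps $|A|=m$, strictly increases the degree sum $\sum_{v\in A}\deg_{G^2}(v)$, and I would then argue that the gain is realized in $|I_{G^2}|$ rather than in $|\Theta_{G^2}|$, contradicting the optimality of $L_m$. Through the identity above this reduces the whole problem to bounding how much of the extra incident degree escapes across the boundary of $A$.

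Controlling that split is exactly where the difficulty concentrates, and it is the step I expect to be the main obstacle. The row-structure of $\L$ couples the horizontal contribution $\delta_G(q+1)$ and the vertical contribution $\delta_G(j+1)$, so a local swap that imports a high-degree vertex can have compensating ripple effects: its extra incident edges may all point outside $A$, inflating $|\Theta_{G^2}|$ and leaving $|I_{G^2}|$ unchanged. Ruling out such perfect compensation for \emph{every} non-regular $G$ — equivalently, showing that a degree imbalance can never be fully absorbed into the boundary at every size — is the crux, and I would expect to need a density hypothesis on $\delta_G$, in the spirit of Theorem \ref{abstractLGP}, to pin the gain into the interior. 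Whether the swap argument survives with no hypothesis whatsoever is precisely the delicate point on which the statement turns.
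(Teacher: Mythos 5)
You set out to prove a statement that the paper in fact \emph{refutes}: Conjecture \ref{LexReg} is false, and the paper's entire contribution concerning it is a counterexample, not a proof. In Section \ref{concluding} a non-regular graph $Z_2$ is constructed (from two disjoint paths of sizes $3$ and $2$ and disjoint triangles) with
$\delta_{Z_2} = (0,1,2,3,4,5,6,7,7,6,7,8,9,10,11,12,13)$.
This sequence is not symmetric, so by Theorem \ref{reg} (Bonnet--Sikora) $Z_2$ is not regular; yet, as the paper notes, $\L$ is optimal for $Z_2^2$. Consequently the contrapositive you planned to establish --- ``$G$ not regular implies $\L$ not optimal for $G^2$'' --- is simply not a theorem, and no amount of technical work can close your argument.

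To your credit, your proposal breaks down exactly where you predicted it would. Your swap argument rests on the hope that a strict increase in $\sum_{v\in A}\deg_{G^2}(v)$ must be at least partially realized in $|I_{G^2}(A)|$; the counterexample shows that the ``perfect compensation'' you hoped to rule out genuinely occurs --- the dip in $\delta_{Z_2}$ (the entries $\dots,7,7,6,7,\dots$) reflects an optimal order that takes lower-degree vertices mid-sequence, with the entire degree deficit absorbed by the boundary term $|\Theta|$ in the handshake identity. Your fallback, that a density hypothesis in the spirit of Theorem \ref{abstractLGP} might pin the gain into the interior, also fails: $Z_2$ is itself $\delta$-dense (its monotonic segments begin at $0$, $7$, and $6$, and the latter two exceed $1$), so Theorem \ref{uniqueness} applies to it and yields only that $\L$ and $\C$ are the unique compressed optimal orders --- uniqueness of the order, not regularity of the graph. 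The correct response to this statement is a disproof by explicit construction; the right instinct, once your crux step resisted all attempts, would have been to mine that resistance for a counterexample rather than to search for a stronger hypothesis under which the swap goes through.
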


In Section \ref{uniquenessSection} we prove Theorem \ref{abstractLGP}.
In Section \ref{concluding} we provide a counterexample to Conjecture \ref{LexReg}, 
and give some ideas on how to drop the $\delta$-dense condition in Theorem \ref{abstractLGP}.

\section{Uniqueness of $\L$ and $\C$ in $G^2$}\label{uniquenessSection}

Suppose that $G$ and $H$ admit optimal orders.
For $A\subseteq V_H\times V_G$ denote $A_i(a)=\{(x_1,x_2)\in A\bigm| x_i=a\}$. 
We say that $A$ is \textit{compressed} if 
$A_i(a) = \{0, 1, . . . , |A_i(a)|-1\}$ for $i = 1,2$ and any $a$.
A proof of the following folklore result can be found in \cite{HarperBook2004}.

\begin{lem}
	If $A_1 \subseteq \cdots \subseteq A_n\subseteq V_{H\square G}$, 
	then there exist compressed sets $A_1' \subseteq \cdots \subseteq A_n'$ such that $|A_i| = |A_i'|$ and $|I_{H\square G}(A_i)| \leq |I_{H\square G}(A_i')|$,
	for all $i\in [n]$.
\end{lem}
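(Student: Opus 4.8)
The plan is to prove this compression lemma by a standard "push-down" argument applied one coordinate at a time. The key observation is that for a fixed coordinate $i$ and a fixed value $a$ of that coordinate, the induced edges $I_{H\square G}(A)$ split naturally into edges that lie \emph{within} a fiber $\{(x_1,x_2) \mid x_i = a\}$ and edges that \emph{cross} between different fibers. In the Cartesian product, an edge between two vertices that differ in coordinate $i$ connects two vertices whose other coordinate agrees; thus after fixing coordinate $i$, the cross-fiber edges are exactly determined by which vertices appear in each fiber at the same position in the \emph{other} coordinate.

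\medskip
First I would set up a single compression operation $C_i$ that, for each value $a$, replaces the fiber $A_i(a)$ by the initial segment $\{0,1,\dots,|A_i(a)|-1\}$ using the optimal order on the graph playing the role of coordinate $i$ (either $G$ or $H$). The crucial claim is that $C_i$ does not decrease the number of induced edges. The within-fiber edges can only increase (or stay equal) because each fiber is replaced by an initial segment, which is optimal for the induced edges problem in that single factor by the NS assumption. For the cross-fiber edges, I would argue that compressing all fibers to initial segments aligns the vertices so that the crossing edges between two fibers of sizes $p$ and $q$ number at least as many as before; intuitively, making both fibers downward-closed maximizes the overlap of their vertex sets, hence the number of edges of the "matching" type that the Cartesian product contributes across coordinate $i$. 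The nesting $A_1 \subseteq \cdots \subseteq A_n$ is preserved by $C_i$ because compressing each fiber to an initial segment is monotone: if $A_i(a) \subseteq A_i'(a)$ as sets of sizes $p \leq q$, then the initial segments of sizes $p$ and $q$ are again nested.

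\medskip
Next I would iterate. Applying $C_1$ makes the set compressed in coordinate $1$; applying $C_2$ then makes it compressed in coordinate $2$. The standard subtlety is that $C_2$ might destroy compression in coordinate $1$, so I would invoke a \emph{potential/termination} argument: each compression that genuinely changes the set strictly increases a monovariant (for instance, the sum over all vertices in $A$ of a strictly decreasing weight such as $-(\text{position in the order})$, or equivalently it moves the set "down" in a well-founded order on finite vertex sets of fixed size). Since there are finitely many subsets of each size, alternating $C_1$ and $C_2$ must terminate at a set that is fixed by both, i.e.\ a fully compressed set. Throughout, the sizes $|A_i|$ are preserved and $|I_{H\square G}(A_i)|$ never decreases, and nesting is maintained at every step, which yields the desired $A_1' \subseteq \cdots \subseteq A_n'$.

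\medskip
The main obstacle I anticipate is the cross-fiber edge count under compression. It is not immediately obvious that replacing two arbitrary subsets of $V_G$ (the slices of two adjacent fibers in coordinate $i$) by initial segments of the same sizes does not lose any of the "diagonal" edges that the Cartesian product induces between those fibers. The clean way to handle this is to note that for fibers indexed by adjacent values $a, b$ in coordinate $i$, the contribution to $I_{H\square G}$ is precisely $|A_i(a) \cap A_i(b)|$ counted in the other coordinate's vertex set, and that the intersection of two initial segments of sizes $p$ and $q$ has size $\min(p,q)$, which is the maximum possible intersection size of any two sets of those sizes. Making this bookkeeping precise — correctly separating the within-fiber and cross-fiber contributions and verifying the $\min(p,q)$ domination simultaneously for all pairs of adjacent fibers — is the technical heart of the argument; everything else is routine monotonicity and termination.
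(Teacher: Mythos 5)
Your argument is correct and is precisely the standard compression proof that the paper attributes to folklore and cites from Harper's book rather than reproducing: compress one coordinate at a time, bound the within-fiber edges by the NS assumption on the factor, bound the cross-fiber edges by $\min(p,q)\geq|F_a\cap F_b|$, preserve nesting by monotonicity of fiber sizes, and terminate via a rank-sum potential. No gaps worth flagging; the only loose phrasing is in the monovariant (the usual choice is the total sum of coordinate ranks, which strictly \emph{decreases} at every non-trivial compression step), but the termination argument goes through exactly as you describe.
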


Thus, if $H\square G$ has NS, then $H\square G$ has NS $A_1\subseteq \cdots \subseteq A_{|V|}$ such that each $A_i$ is compressed.
These NS give an order on $H\square G$ 
such that each initial segment is a compressed set.
We will call such orders \textit{compressed optimal orders}.
Note that $\L$ and $\C$ are compressed optimal orders.
A very useful insight for the proof of Theorem \ref{uniqueness} is the following result.

\begin{lem}[Bezrukov \cite{Bezrukov1999Equiv} ,1999]
	If $A\subseteq V_{H\square G}$ is compressed then
	\begin{align*}
		|I(A)| = \sum_{(x,y)\in A} (\delta_H(x) + \delta_G(y)).
	\end{align*}
\end{lem}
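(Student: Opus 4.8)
The plan is to decompose the induced edges of $A$ according to the two kinds of edges in a Cartesian product, and then to exploit compression so that each piece becomes an induced-edge count of an \emph{initial segment} in a single factor, where the nested-solutions data lets us read off the answer directly from the $\delta$-sequence. The whole computation is driven by compression turning fibers into initial segments.

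First I would recall that, by definition of the Cartesian product, every edge of $H\square G$ is of exactly one of two types: a \emph{$G$-edge} $\{(x,y),(x,v)\}$ with $\{y,v\}\in E_G$ (the $H$-coordinate fixed), or an \emph{$H$-edge} $\{(x,y),(u,y)\}$ with $\{x,u\}\in E_H$ (the $G$-coordinate fixed). These two families are disjoint, since an edge cannot fix both coordinates, so they partition $I(A)$. I would then group the $G$-edges of $A$ by their common first coordinate: for fixed $x$, the $G$-edges of $A$ lying over $x$ are precisely the edges of $G$ induced on the fiber $\{y : (x,y)\in A\}$, i.e.\ on $A_1(x)$ read in the $G$-factor. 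Symmetrically, grouping the $H$-edges by their common second coordinate $y$ gives the edges of $H$ induced on $A_2(y)$, and each of these two groupings is itself a partition since each $G$-edge has a unique first coordinate and each $H$-edge a unique second coordinate.

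The key step is that compression makes each fiber an initial segment: $A_1(x)=\{0,1,\dots,|A_1(x)|-1\}$ and $A_2(y)=\{0,1,\dots,|A_2(y)|-1\}$. Since $G$ has NS, the initial segment of size $k$ is optimal, so it induces exactly $I_G(k)=\sum_{m=1}^{k}\delta_G(m)$ edges, the sum telescoping by the definition $\delta_G(m)=I_G(m)-I_G(m-1)$; equivalently, $\delta_G(y)$ counts the neighbours of $y$ preceding it in the optimal order, and each induced edge is charged once, to its larger endpoint. Hence the number of $G$-edges over $x$ equals $\sum_{y:(x,y)\in A}\delta_G(y)$, and likewise the number of $H$-edges over $y$ equals $\sum_{x:(x,y)\in A}\delta_H(x)$. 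Summing the first identity over all $x$ and the second over all $y$, then adding, yields
\[
|I(A)|=\sum_{(x,y)\in A}\delta_G(y)+\sum_{(x,y)\in A}\delta_H(x)=\sum_{(x,y)\in A}\bigl(\delta_H(x)+\delta_G(y)\bigr),
\]
as claimed.

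The only genuinely delicate points are bookkeeping ones rather than deep obstacles: I must confirm that the two edge families really do partition $I(A)$, that grouping within each family is again a partition, and that the vertex-indexed quantity $\delta_G(y)$ appearing in the statement is exactly the increment the telescoping produces when vertex $y$ is appended to the initial segment below it. Once these identifications are in place the computation is immediate, and the conceptual content is carried entirely by the fact that compression lets the single-factor $\delta$-sequences govern a two-factor edge count.
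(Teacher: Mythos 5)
Your proof is correct. The paper gives no proof of this lemma---it is quoted from Bezrukov's 1999 paper---and your argument (partition $I(A)$ into $G$-edges and $H$-edges, use compression to turn each fiber into an initial segment, then use NS to telescope $I_G(k)=\sum_{m=1}^{k}\delta_G(m)$) is the standard one; you also correctly handle the only delicate point, namely the off-by-one identification between the vertex labels $\{0,\dots,|V|-1\}$ and the index set $\{1,\dots,|V|\}$ on which $\delta$ is defined, which is exactly the convention the paper uses when it later applies this lemma (e.g.\ the terms $\delta(x+1)+\delta(y+2)$ in the proof of Lemma \ref{afterFirstColumn}).
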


\begin{thm}\label{uniqueness}
	Suppose that $G$ and $G^2$ have NS,
	and let $\O$ be a compressed optimal order on $G^2$.
	If $G$ is $\delta$-dense then $\O = \L$ or $\O = \C$.
\end{thm}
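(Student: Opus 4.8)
The plan is to translate the statement into a weighted extremal problem on Young-diagram shapes and then argue that $\delta$-denseness rigidifies the optimal order to the two obvious choices. First, by the compression lemma and Bezrukov's formula, a compressed optimal order on $G^2 = G\square G$ is precisely a chain of maximum-weight down-sets $\emptyset = A_0 \subset A_1 \subset \cdots$ in the grid poset $V_G\times V_G$ (each factor a chain $0<1<\cdots<|V|-1$), where compressed sets are exactly the down-sets (Young-diagram shapes), the weight of a down-set is $\sum_{(x,y)\in A}(\delta(x)+\delta(y))$ with $\delta:=\delta_G$, and $|A_m|=m$. Since the cell weight $w(x,y)=\delta(x)+\delta(y)$ is symmetric, the transpose $(x,y)\mapsto(y,x)$ preserves weights and sizes, sends down-sets to down-sets, and interchanges $\L$ with $\C$; so I may break the first ambiguity in favor of $\L$ and aim to prove $\O=\L$.

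The structural engine is the following consequence of $\delta$-denseness. By Lemma \ref{lem1} each monotonic segment of $\delta_G$ increases in steps of exactly $1$, the first segment being $0,1,2,\dots$; as every later segment starts at a value exceeding $1$, the value $0$ is attained only at the least vertex, the value $1$ can occur only at the next vertex, and $\delta_G\ge 2$ at every other vertex. Equivalently, the only cells of weight at most $1$ lie in $\{(0,0),(0,1),(1,0)\}$. I will combine this with a greedy principle: if $A_m$ is optimal then every cell addable to $A_m$ (every outer corner) has weight at most $\Delta_m := I_{G^2}(m+1)-I_{G^2}(m)$, and $A_{m+1}$ adjoins some corner of weight exactly $\Delta_m$, so the order always adds a maximum-weight corner. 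Using the ``no cheap cells'' fact, I will show that whenever the current frontier has a line (row or column) that is partially but not singly filled, completing that line strictly dominates opening a fresh line (the new-line corner has weight at most $\delta(k)+1$, the completing corner at least $\delta(k)+2$); hence the step is forced and preserves the shape of an $\L$-initial segment. The only configurations admitting two maximum-weight corners are the ``singleton'' ties, where a line holding a single cell may be extended or a new unit line opened.

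The crux is to show these ties never open a second global branch, so that after the initial $\L$-versus-$\C$ choice the order is pinned to $\L$. Here I will argue by exchange and look-ahead: suppose $\O$ first departs from $\L$-shapes at such a tie, opening a new unit line and thereafter threading a staircase of singletons. Because the grid is an $|V|\times|V|$ box, this staircase must reach the last available line after finitely many steps, at which point no further line can be opened and $\O$ is forced to spend corners of weight $\delta(k)+1$, whereas the competing completion route accrues corners of weight at least $\delta(k)+2$ since $\delta(y)\ge 2$ for $y\ge 2$. The heart of the argument, and its main obstacle, is to convert this into a strict inequality at a definite size: I must exhibit a down-set of that size outweighing $\O$'s forced continuation, which requires bookkeeping the weight $\O$ ``banked'' while running the staircase against what it is now forced to forgo, and checking that the ledger closes strictly before the frontier exits the box. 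This is exactly where $\delta$-denseness is indispensable: the weight-$1$ plateaus it forbids (present for trees and the Petersen graph) are precisely what would let a staircase pay for itself indefinitely and yield genuinely different optimal orders, so their absence is what forces $\O\in\{\L,\C\}$.
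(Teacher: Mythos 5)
Your setup matches the paper's: reduce to maximum-weight down-sets via compression and Bezrukov's formula, use the transpose symmetry to aim for $\O=\L$, observe that $\delta$-denseness forces $\delta_G\geq 2$ beyond the first two vertices, and note that the order must always adjoin a maximum-weight addable corner. Your ``completing a line with $\geq 2$ cells beats opening a new line'' step is exactly the paper's Case 1 (and its Lemma \ref{firstColumn}), and it is correct. The problem is the singleton ties, which you correctly identify as the crux and then explicitly leave open: ``I must exhibit a down-set of that size outweighing $\O$'s forced continuation, which requires bookkeeping\dots and checking that the ledger closes strictly.'' That bookkeeping is the entire content of the paper's Case 2, so as written the proposal has a genuine gap at its central step.

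Worse, the ledger you describe --- comparing the cell weights $\delta(x)+\delta(y)$ banked along the staircase against those forgone on the row --- does not close on its own. Writing out the comparison at the first step $s$ where the order finally adds $(x,1)$ after opening rows $x+1,\dots,x+t$, the difference against the competitor $B=A_{k-2}\cup\{(x,0),\dots,(x,t+1)\}$ equals $\sum_{j=1}^{t}\bigl(\delta(j+2)+\delta(x+1)-\delta(x+j+1)\bigr)$, and Lemma \ref{lem1} plus $\delta$-denseness only give each summand the lower bound $\delta(j+2)-j$, which is negative for $j\geq 3$ when $\delta$ has a long low plateau while $\delta(x+1),\dots,\delta(x+t+1)$ climb steeply. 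The paper closes the ledger not by $\delta$-arithmetic but by going back to actual edge counts and invoking the optimality of initial segments \emph{in $G$ itself}: the staircase $R=\{(x,0),\dots,(x+t,0)\}$ satisfies $|I(R)|\leq I_G(t+1)=|I(C)|$ because $\{x,\dots,x+t\}$ is just some $(t+1)$-set of $G$, and $|I(A_{k-2},R)|\leq (t+1)\delta(x+1)$ because each vertex $x+j$ has at most $\delta(x+1)$ neighbours in $\{0,\dots,x-1\}$ (else $\{0,\dots,x-1,x+j\}$ would beat the initial segment of size $x+1$). With those two inputs the difference reduces to $\delta(t+2)-\delta(2)>0$. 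You would need to identify and use these graph-level inequalities --- the weight formula alone is not enough --- and also anchor the comparison at the first step the staircase ``gives in'' (which can happen long before the box is exhausted), rather than at the last available line.
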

\begin{proof}
	Take a chain of compressed optimal sets $A_1\subseteq A_2\subseteq \cdots \subseteq A_{|V_H||V_G|}$,
	such that $|A_i|=i$.
	We always have $A_1 = \{(0,0)\}$, since $A_1$ is compressed.
	The claim is trivial for $|V_G|=1$, so suppose that $|V_G|>1$.
	Also, because $A_2$ is compressed we have $A_2=\{(0,0),(0,1)\}$ or $A_2=\{(0,0),(1,0)\}$.
	If $A_2=\{(0,0),(0,1)\}$ then Lemma \ref{firstColumn} and Lemma \ref{afterFirstColumn} give us $\O=\L$.
	If $A_2=\{(0,0),(1,0)\}$ then symmetry and the same argument gives us $\O=\C$.
	We will now prove these lemmas.
	The following inequality is useful for proving Lemma \ref{firstColumn} and Lemma \ref{afterFirstColumn}.
	
	\begin{lem}
		If $k\in \{3,\dots, |V_G|\}$ then $\delta(k) - \delta(2) > 0$.
	\end{lem}
	\begin{proof}
		Since $G$ is $\delta$-dense we have $\delta_G(k) - \delta_{G}(2)\geq \min_{n\geq 2} s_{G,n} - \delta_G(2) \geq 2-1>0$.
	\end{proof}
	
	\begin{lem}\label{firstColumn}
		If $A_2=\{(0,0),(0,1)\}$ then $A_{|V_G|} = \{(0,0), (0,1),\dots, (0, |V_G|-1)\}$.
	\end{lem}
	\begin{proof}
		Assume to the contrary that this is not the case.
		Then there is some $k\geq 3$ such that $A_k \neq \{(0,0),\dots, (0,k-1)\}$,
		and for all $k'<k$ we have $A_{k'} = \{(0,0),\dots, (0,k'-1)\}$.
		Thus, $A_k = \{(0,0),\dots, (0,k-2), (1,0)\}$,
		since $A_k$ is compressed and $A_{k-1} = \{(0,0),\dots, (0,k-2)\}$.
		However now we have a contradiction with the optimality of $A_k$, 
		\begin{align*}
			|I(\{(0,0), (0,1),\dots, (0,k-1)\})|-|I(A_k)| = \delta_G(k)-\delta_G(2) > 0,
		\end{align*}
	\end{proof}
	
	\begin{lem}\label{afterFirstColumn}
		If $A_{|V_G|} = \{(0,0),(0,1),\dots, (0, |V_G|-1)\}$ then $\O = \L$.
	\end{lem}
	\begin{proof}
		Assume for the purposes of a contradiction that the claim does not hold.
		Let $k$ be minimal such that $A_k$ is not an initial segment of the lex order.
		Also, let $(x,y)$ be the unique vertex in $A_{k-1}\setminus A_{k-2}$,
		and $(w,z)$ be the unique vertex in $A_{k}\setminus A_{k-1}$.
		We must have $y< |V_G|-1$, 
		since otherwise $A_{k-1}$ is an initial segment of lex and $A_k$ is compressed.
		This forces $(w,z) = (x+1,0)$.
		
		\textit{Case 1:} If $y>0$ then we get a contradiction with the optimality of $A_k$,
		\begin{align*}
			|I(A_{k-1}\cup \{(x,y+1)\})| - |I(A_k)| &= \delta(x+1) + \delta(y+2) - \delta(x+2) - \delta(1),\\
			&= \delta(x+1) + \delta(y+2) - \delta(x+2),\\
			&\geq \delta(x+1) + \delta(y+2) - \delta(x+1) - 1,\\
			&= \delta(y+2) - \delta(2),\\
			&>0.
		\end{align*}
		
		\textit{Case 2:} Suppose $y=0$. 
		So, there exist $t\geq 1$ such that for $s=k+t$ we have
		\begin{align*}
			A_s = A_{k-1} \cup \{(x+1,0), (x+2,0),\dots, (x+t,0)\} \cup \{(x,1)\}.
		\end{align*}
		Let 
		\begin{align*}
			C &= \{(x,0),(x,1),\dots, (x,t)\},\\
			R &= \{(x,0), (x+1,0), \dots, (x+t,0)\},\\
			B &= A_{k-2}\cup C\cup \{(x,t+1)\}.
		\end{align*}
		One has,
		\begin{align*}
			|I(B\setminus \{(x,t+1)\})| - |I(A_s\setminus \{(x,1)\})|
			&=|I(C)|-|I(R)| + |I(A_{k-2}, C)| - |I(A_{k-2}, R)|,\\
			&\geq |I(A_{k-2}, C)| - |I(A_{k-2}, R)|,\\
			&= (t+1)|I(A_{k-2},\{(x,0)\})| - \sum_{j=0}^{t} |I(A_{k-2}, \{(x+j,0)\})|\\
			&\geq (t+1)|I(A_{k-2},\{(x,0)\})| - \sum_{j=0}^{t} |I(A_{k-2},\{(x,0)\})|\\
			&=0.
		\end{align*}
		Thus, we a contradiction with the optimality of $A_s$, since from the above inequality we have
		\begin{align*}
			|I(B)| - |I(A_s)|
			&= \delta(x+1) + \delta(t+2) - \delta(x+1) - \delta(2) \geq \delta(t+2) - \delta(2) >0,
		\end{align*}
		
		Therefore, the claim holds, since in both cases we get a contradiction.
	\end{proof}
	
	As mentioned at that start of the proof we are done after the proof of lemmas \ref{firstColumn} and \ref{afterFirstColumn}.
\end{proof}

We can now prove Theorem \ref{abstractLGP}.

\begin{cor}
	Suppose that $G$ is $\delta$-dense.
	If $G$ and $G^2$ have NS then so does $G^d$ for all $d\geq 3$.
\end{cor}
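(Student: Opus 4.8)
The plan is to deduce this corollary by combining Theorem \ref{uniqueness} with the Ahlswede--Cai local-global principle (Theorem \ref{lgp}). Since the statement is just a restatement of Theorem \ref{abstractLGP}, the substantive work has already been carried out in Theorem \ref{uniqueness}; all that remains is to bridge the existence of nested solutions with the optimality of a concrete named order, and then to feed that order into the local-global principle.

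First I would use the hypothesis that $G^2$ has NS together with the folklore compression lemma to produce a compressed optimal order $\O$ on $G^2$. Because $G$ is $\delta$-dense, Theorem \ref{uniqueness} applies and forces $\O = \L$ or $\O = \C$. In either case we obtain that one of the two classical orders is optimal for $G^2$, which is exactly the hypothesis the local-global principle requires.

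Next I would split into two cases according to the conclusion of Theorem \ref{uniqueness}. If $\O = \L$, then $\L$ is optimal for $G^2$; since $G$ has NS by assumption, Theorem \ref{lgp} gives that $\L$ is optimal for $G^d$ for every $d \geq 3$. If instead $\O = \C$, then by the remark that Theorem \ref{lgp} also holds with $\C$ in place of $\L$, we conclude that $\C$ is optimal for $G^d$ for every $d \geq 3$. In both cases the initial segments of the relevant order form a chain of nested optimal sets, so $G^d$ has NS, as desired.

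There is no real obstacle in this step: the only care required is the bookkeeping that identifies the two equivalent phrasings, that $G^d$ has NS and that $\L$ (respectively $\C$) is optimal for $G^d$. The genuine mathematical content resides entirely in Theorem \ref{uniqueness}, whose proof eliminates every compressed optimal order apart from the two lexicographic ones; once this rigidity is in hand, the passage to all higher powers is supplied directly by Ahlswede and Cai.
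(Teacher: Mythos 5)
Your proposal is correct and follows exactly the paper's own argument: extract a compressed optimal order from the NS of $G^2$ via the compression lemma, invoke Theorem \ref{uniqueness} to identify it with $\L$ or $\C$, and conclude via Theorem \ref{lgp} (and its $\C$-analogue). The only difference is that you spell out the two cases explicitly, which the paper leaves implicit.
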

\begin{proof}
	There exists a compressed optimal order on $G^2$, since $G^2$ has NS.
	Hence, Theorem \ref{uniqueness} forces this order to be $\L$ or $\C$.
	Therefore, we are done by Theorem \ref{lgp}.
\end{proof}

\section{Towards a Complete Local-Global Principle}\label{concluding}

We will first provide a counterexample to Conjecture \ref{LexReg}.
Let
\begin{align*}
	X &= (  \{x_0,x_1,x_2,x_3,x_4\},      \{\{x_0,x_1\}, \{x_1,x_2\}, \{x_3,x_4\}  \}), 
	\mbox{ two disjoint paths of sizes $3$ and $2$},\\
	Y &= (  \{y_0,y_1,y_2,y_3,y_4,y_5\},      \{\{y_0,y_1\}, \{y_0,y_2\},\{y_1,y_2\},\{y_3,y_4\},\{y_3,y_5\}, \{y_4,y_5\}  \}),\\ 
	&\mbox{ two disjoint cliques of size $3$}.
\end{align*}
Then for all $n\in \N$ define $Z_n = X \ast Y_1\ast \cdots \ast Y_{n-1}$,
%where $\ast$ denote the graph join operation.
It is easy to check that
\begin{align*}
	\delta_{Z_2} = (0,1,2,3,4,5,6,7,7,6,7,8,9,10,11,12,13).
\end{align*}

It is also easy to check that $\L$ is optimal for $Z_2^2$.
We say that $\delta_G$ is \textit{symmetric} iff
$\delta(i)+ \delta(|V| - i +1) = \delta(|V|)$ for all $i\in \{1,\dots, |V|\}$.
Notice that $Z_2$ is not regular because of the following result.

\begin{thm}\label{reg}
	{\rm (Bonnet, Sykora \cite{BonnetSikora2016})}
	$\delta_G$ is symmetric iff $G$ is regular.
\end{thm}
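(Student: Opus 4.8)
The plan is to derive both implications from two counting identities valid for every (simple) graph, isolating the real difficulty in the direction ``$\delta_G$ symmetric $\Rightarrow$ $G$ regular,'' which I would settle by a single global summation. Throughout write $N=|V|$ and let $d_{\min}=\min_{v}\deg(v)$ denote the minimum degree. First I would record the bookkeeping. Telescoping the definition gives $I_G(m)=\sum_{i=1}^{m}\delta_G(i)$, so $\sum_{i=1}^{N}\delta_G(i)=I_G(N)=|E|$. Deleting a single vertex $v$ removes exactly $\deg(v)$ edges, whence $I_G(N-1)=|E|-d_{\min}$ and therefore
\begin{align*}
	\delta_G(N)=I_G(N)-I_G(N-1)=d_{\min}.
\end{align*}
Finally, the crossing edges of $A$ and of its complement coincide, so $|\Theta(A)|=|\Theta(A^{c})|$ and hence $\Theta_G(m)=\Theta_G(N-m)$ for every $m$.

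For $G$ regular of degree $n$, I would invoke the folklore lemma in the form $|I(A)|=\tfrac12(n|A|-|\Theta(A)|)$; maximizing induced edges on sets of size $m$ then coincides with minimizing the boundary, giving $I_G(m)=\tfrac12(nm-\Theta_G(m))$. Substituting this into $\delta_G(m)$ and into $\delta_G(N-m+1)$ and applying the boundary symmetry $\Theta_G(N-m+1)=\Theta_G(m-1)$ and $\Theta_G(N-m)=\Theta_G(m)$, the boundary contributions cancel and I obtain
\begin{align*}
	\delta_G(m)+\delta_G(N-m+1)=n=\delta_G(N),
\end{align*}
which is exactly the asserted symmetry; the extreme indices $m=1$ and $m=N$ are checked directly.

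For the converse, assume $\delta_G$ is symmetric. Summing $\delta_G(i)=\delta_G(N)-\delta_G(N-i+1)$ over $i=1,\dots,N$ and reindexing the reflected sum yields $2\sum_{i=1}^{N}\delta_G(i)=N\,\delta_G(N)$, so that
\begin{align*}
	|E|=\sum_{i=1}^{N}\delta_G(i)=\frac{N\,\delta_G(N)}{2}=\frac{N\,d_{\min}}{2}.
\end{align*}
On the other hand the handshake identity gives $|E|=\tfrac12\sum_{v}\deg(v)\geq \tfrac12 N d_{\min}$, with equality precisely when every vertex has degree $d_{\min}$. Comparing the two forces equality throughout, so $G$ is regular.

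The step I expect to be the crux is the converse: the key realizations are that $\delta_G(N)$ equals the minimum degree and that symmetry pins $\sum_i\delta_G(i)$ to exactly $\tfrac{N}{2}\delta_G(N)$. Once these are in hand the gap between the average and the minimum degree closes and regularity is immediate, so the argument reduces to careful bookkeeping rather than any delicate extremal estimate. The only routine care needed is at the boundary indices in the first implication, together with confirming that the counting identities, being purely local, remain valid when $G$ is disconnected, which they do.
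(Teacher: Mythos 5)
Your proof is correct. The paper itself does not prove Theorem \ref{reg} --- it is quoted from Bonnet and Sikora --- so there is no internal argument to compare against, but your two pillars both check out: for the forward direction, the identity $|\Theta(A)|+2|I(A)|=n|A|$ from the paper's first lemma converts maximizing $I$ into minimizing $\Theta$, and the complement symmetry $\Theta_G(m)=\Theta_G(N-m)$ makes the boundary terms cancel in $\delta_G(m)+\delta_G(N-m+1)=n=\delta_G(N)$; for the converse, the observations that $\delta_G(N)=d_{\min}$ (delete the cheapest vertex) and that summing the symmetry relation pins $|E|=\tfrac{N}{2}d_{\min}$ combine with the handshake bound $|E|\geq\tfrac{N}{2}d_{\min}$ to force all degrees equal. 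The extreme indices and the disconnected case behave as you say, so the argument is complete as written.
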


\begin{con}\label{newLex}
	For all $n\geq 1$ we have that $Z_n^d$ has NS.
\end{con}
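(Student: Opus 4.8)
The plan is to reduce the conjecture, for each fixed $n$, to two \emph{local} statements and then invoke the global principle already in hand. By Theorem~\ref{lgp} it is enough to show that $Z_n$ has NS and that $\L$ is optimal for $Z_n^2$: these are the cases $d=1$ and $d=2$, and Ahlswede--Cai then forces $\L$ to be optimal for $Z_n^d$---hence $Z_n^d$ to have NS---for every $d\geq 3$. So the entire burden sits at $d\leq 2$. For $n=2$ this is already settled: $\L$ is optimal for $Z_2^2$ (so $Z_2$ and $Z_2^2$ have NS), and the monotone segments of $\delta_{Z_2}$, namely $(0,1,\dots,7)$, $(7)$ and $(6,7,\dots,13)$, have starting values $0,7,6$; thus $Z_2$ is $\delta$-dense and Theorem~\ref{abstractLGP} applies directly.

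For general $n\geq 2$ I would first pin down $\delta_{Z_n}$ and the NS of $Z_n$ itself from the recursion $Z_n=Z_{n-1}\ast Y_{n-1}$, building the nested optimal sets of $Z_n$ out of those of $Z_{n-1}$ and of the triangle-pair $Y$. The decisive point is $\delta$-density. Since the only dip of $\delta_{Z_2}$ occurs at the high values $7,6$, I expect each further factor $\ast Y$ to introduce dips only at correspondingly high levels, so that every non-initial segment-start stays above $1$ and $Z_n$ is $\delta$-dense for all $n\geq 2$. Granting this, Theorem~\ref{uniqueness} pins the compressed optimal order on $Z_n^2$ to $\L$ or $\C$, and Theorem~\ref{abstractLGP} collapses the whole problem to the bare assertions that $Z_n$ and $Z_n^2$ have NS---statements about the first two powers alone.

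The conceptually new case is $Z_1=X$, the disjoint union of a $3$-path and a $2$-path, together with any larger $Z_n$ that fails to be $\delta$-dense. Here $\delta_X=(0,1,1,0,1)$ has segment-starts $0,1,0$, so $X$ is \emph{not} $\delta$-dense and Theorem~\ref{uniqueness} is unavailable: the auxiliary inequality $\delta(k)-\delta(2)>0$ that drives the proofs of Lemmas~\ref{firstColumn} and~\ref{afterFirstColumn} fails at the dip, where $\delta(4)=0<\delta(2)$, so the compression and exchange steps can tie or even favour a non-lex placement. For such graphs I would argue directly on $Z_n^2$ through Bezrukov's identity $|I(A)|=\sum_{(x,y)\in A}(\delta(x)+\delta(y))$, compressing as usual but handling by hand the vertices seated at the dip positions and exploiting the explicit disjoint-paths structure of $X$ (and disjoint-triangles structure of $Y$) to exclude the few configurations the generic argument cannot rule out. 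Establishing that $\L$---or, by symmetry, $\C$---is optimal for $X^2$ would settle $n=1$ through Theorem~\ref{lgp} and would model the treatment of the remaining non-dense levels.

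The main obstacle is exactly this removal of the $\delta$-dense hypothesis at $d=2$, which is the content the title names a \emph{complete} local-global principle. Were the unrestricted principle conjectured elsewhere in the paper available---that $G$ and $G^2$ having NS already forces NS for all $G^d$---the conjecture would be immediate from $Z_n^2$ having NS; lacking it, one must actually exhibit an optimal compressed order on $Z_n^2$ as $\L$ or $\C$ to feed Theorem~\ref{lgp}, and for the non-dense levels it is not even clear a priori that $\L$ or $\C$ is optimal rather than some third compressed order. This is the single place where the mechanism of Theorem~\ref{uniqueness} genuinely breaks rather than merely demanding more bookkeeping. Entangled with it is uniformity in $n$: a finite computation disposes of any one $Z_n^2$, but an all-$n$ proof calls for an induction carried along $Z_n=Z_{n-1}\ast Y_{n-1}$, and the way Cartesian squaring interacts with $\ast$---how to read the profile of $(Z_{n-1}\ast Y)^2$ off lower-dimensional data---is where I expect the real technical weight to fall.
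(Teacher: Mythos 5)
The statement you are proving is a \emph{conjecture}: the paper offers no proof of it, and the remarks immediately following it sketch exactly the reduction you describe ($d=1$ by analogy with Theorem 5 of Bezrukov--Bulatovic--Kuzmanovski, $d\geq 3$ from Theorem~\ref{lgp} once $d=2$ is settled, $d=2$ singled out as the interesting open case because $Z_n$ is not regular). So your proposal reproduces the paper's own roadmap rather than supplying a proof, and every step not already covered by the paper's machinery is left as ``I would argue'' or ``I expect.'' Concretely, three things are missing and each is the actual content of the conjecture. First, NS for $Z_n$ itself ($d=1$) is asserted to follow from the recursion $Z_n=Z_{n-1}\ast Y_{n-1}$ but never established. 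Second, and more importantly, NS for $Z_n^2$ is nowhere proved: Theorem~\ref{uniqueness} and Theorem~\ref{abstractLGP} take NS of $G^2$ as a \emph{hypothesis} and only constrain what the optimal order must be, so $\delta$-density of $Z_n$ buys you nothing until you have independently exhibited an optimal compressed order on $Z_n^2$ --- which is precisely the case the paper flags as open. Your claim that $n=2$ ``is already settled'' rests on the paper's unproved ``easy to check'' assertion. Third, the case $n=1$ ($Z_1=X$, not $\delta$-dense, not even connected) is reduced to a promised hand analysis of $X^2$ that is not carried out.

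There is also an obstruction to even setting up your induction along $Z_n=Z_{n-1}\ast Y_{n-1}$: the operation $\ast$ is never defined in the paper, and the displayed $\delta_{Z_2}$ has $17$ entries while $|V_X|+|V_Y|=11$ and $|V_X|\cdot|V_Y|=30$, so $\ast$ is neither the join nor a product in any standard sense one can infer. Until $\ast$ is pinned down, your key structural claim --- that each further $\ast Y$ factor introduces dips in $\delta_{Z_n}$ only at values above $1$, so that $Z_n$ remains $\delta$-dense for $n\geq 2$ --- cannot be verified, and neither can the interaction of $\ast$ with Cartesian squaring that your induction for $d=2$ would require. In short, the proposal correctly identifies where the difficulty lies but does not resolve any part of it; the statement remains open after your argument exactly as it was before.
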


Proving Conjecture \ref{newLex} for $d=1$ should follow from a similar argument to the one used in the proof of Theorem 5 in \cite{BezrukovBulatovicKuzmanovski2018}.
The case $d\geq 3$ will follows from Theorem \ref{lgp} if the case $d=2$ is handled.
Note that Theorem \ref{uniqueness} implies that $\L$ and $\C$ are the only compressed optimal orders.
The case $d=2$ is interesting because $Z_n$ is not regular.

All results point to Conjecture \ref{fullLGP}.
Note that in \cite{BezrukovKuzmanovskiLim2023},
Theorem \ref{lgp} was generalized to include the Petersen graph and many other cases.
The case for the product of tree was not handled in \cite{BezrukovKuzmanovskiLim2023},
but the authors conjecture that it can be included and suggest a way to prove this.

\begin{con}[Harper \cite{HarperBook2004}, 2004]\label{fullLGP}
	If $G$ and $G^2$ have NS then so does $G^d$ for all $d\geq 3$.
\end{con}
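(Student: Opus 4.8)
The plan is to remove the $\delta$-dense hypothesis from Theorem \ref{abstractLGP}. The proof of that theorem factors cleanly through two ingredients: the Ahlswede--Cai local-global principle (Theorem \ref{lgp}), which is stated only for $\L$ and $\C$, and the classification of compressed optimal orders (Theorem \ref{uniqueness}), which pins the order on $G^2$ down to exactly $\L$ or $\C$. The hypothesis enters only through the second ingredient, via the inequality $\delta(k) - \delta(2) > 0$ for $k \geq 3$. When $G$ is not $\delta$-dense some later monotonic segment starts at value $1$, this inequality fails, and the forcing arguments of Lemma \ref{firstColumn} and Lemma \ref{afterFirstColumn} collapse. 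The task is therefore to replace the uniqueness ingredient by something weaker but still sufficient.

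The key observation is that having NS is an \emph{existence} statement, so uniqueness is more than we need: it suffices to exhibit a single compressed optimal order on $G^2$ that obeys a local-global principle. I would therefore proceed in two parallel directions. First, extend Theorem \ref{lgp} to as large a class $\mathcal{S}$ of compressed optimal orders as possible; the work of \cite{BezrukovKuzmanovskiLim2023} already does this for many orders, including the one arising from the Petersen graph, so the machinery exists and only needs to be pushed further. Second, show that for every $G$ with NS whose square $G^2$ has NS, at least one compressed optimal order on $G^2$ lies in $\mathcal{S}$. In the $\delta$-dense regime both admissible orders lie in $\mathcal{S}$ automatically by Theorem \ref{uniqueness}, so this program strictly generalizes the present result.

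To locate a good order in the degenerate regime I would analyze the optimization directly through Bezrukov's formula, under which maximizing $|I(A)|$ over compressed $A$ of size $m$ amounts to selecting the $m$ cells $(x,y)$ of largest weight $\delta_G(x) + \delta_G(y)$, subject to each row and column being an initial segment. Flat stretches of $\delta_G$ create large families of weight-tied cells, hence many distinct optimal compressed sets; the aim is to select from this family one order whose growth is compatible with the Cartesian product, namely one whose restriction to each coordinate direction is itself optimal in every power. The extreme case is the product of trees, where $\delta_G = (0, 1, 1, \dots, 1)$, the weight takes only the values $0, 1, 2$, and essentially every compressed set that fills the positive region is optimal. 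This is precisely the open case flagged in \cite{BezrukovKuzmanovskiLim2023}, and the method suggested there is the natural point of entry.

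I expect this product-of-trees case, and more generally any $G$ with long flat stretches in $\delta_G$, to be the main obstacle. In the dense case optimality is essentially rigid: Theorem \ref{uniqueness} leaves only $\L$ and its colex mirror, and both lift to higher powers by Ahlswede--Cai. Flatness destroys this rigidity, replacing an essentially unique optimal order by a whole equivalence class of them. The difficulty is no longer finding an optimal order but selecting one that remains optimal after taking products, and verifying that it does so demands exactly the control that Theorem \ref{uniqueness} provided in the dense case and for which no substitute is known once $\delta$-density fails. Making this selection canonical, and proving a local-global principle for it, is where the real work lies.
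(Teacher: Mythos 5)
This statement is not proved in the paper: it is stated as an open conjecture (attributed to Harper), and the paper itself only establishes the special case where $G$ is $\delta$-dense (Theorem \ref{abstractLGP}). Your proposal is therefore not being measured against an existing proof, and it does not itself constitute one. What you have written is a research program, and in fact it is essentially the same program the paper lays out at the end of Section \ref{concluding}: enlarge the class of orders for which a local-global principle holds (the paper's steps 2 and 4, via the pull-push method of \cite{BezrukovKuzmanovskiLim2023}), and classify or at least locate a compressed optimal order on $G^2$ in that class when $\delta$-density fails (the paper's steps 1 and 3). Your diagnosis of where the $\delta$-dense hypothesis is used is accurate: it enters only through the inequality $\delta(k)-\delta(2)>0$ for $k\geq 3$, which powers Lemmas \ref{firstColumn} and \ref{afterFirstColumn}, and without it uniqueness of the compressed optimal order genuinely fails. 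Your observation that uniqueness is stronger than necessary --- that a single well-chosen compressed optimal order admitting a local-global principle would suffice --- is also the right framing and matches the paper's intent.

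The gap is that every load-bearing step is left unproved. You do not exhibit the class $\mathcal{S}$ of orders, you do not prove a local-global principle for any order outside $\{\L,\C\}$ beyond citing that \cite{BezrukovKuzmanovskiLim2023} handles some cases (and explicitly not the product of trees, which you correctly identify as the hardest degenerate case, where $\delta_G=(0,1,\dots,1)$ and optimality is maximally non-rigid), and you do not give any mechanism for selecting, from the large family of weight-tied compressed optimal sets produced by flat stretches of $\delta_G$, one whose initial segments remain optimal in $G^d$ for all $d$. You acknowledge this yourself in your final paragraph. So the proposal is a reasonable and correctly targeted plan of attack --- one that independently rediscovers the paper's own suggested roadmap --- but the conjecture remains exactly as open after your write-up as before it.
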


To solve Conjecture \ref{fullLGP} we suggest the following steps:
\begin{enumerate}
	\item\label{delta=1} Find all optimal orders when $\delta(i) \geq 1$.
	See \cite{BezrukovKuzmanovskiLim2023} for block domination orders and \cite{AhlswedeBezrukov1995, BollobasLeader1991_grid} for the the order on the product of trees.
	We suspect that slight modifications of these orders will produce all possible compressed optimal orders.
	\item Prove a local-global principle for the orders in step \ref{delta=1}.
	We suspect that pull-push method developed in \cite{BezrukovKuzmanovskiLim2023} can be used to solve this problem as well.
	\item\label{delta=0} Find all optimal orders when $\delta(i) \geq 0$.
	We believe that a generalization of the orders in step \ref{delta=1} using block orders from \cite{BezrukovKuzmanovskiLim2023} will work and maybe some slight modification will be needed.
	\item Prove a local-global principle for all orders in step \ref{delta=0}. 
	Again, we suspect that the pull-push method from \cite{BezrukovKuzmanovskiLim2023} can be used here again.
\end{enumerate}

\section{Acknowledgments}
The author would like to thank Sergei L. Bezrukov and Jamie Radcliffe for reading early drafts of this paper.
The proof of Theorem \ref{uniqueness} was simplified drastically after discussions with them.

\bibliographystyle{acm}

\begin{thebibliography}{10}
	
	\bibitem{AhlswedeBezrukov1995}
	{\scshape Ahlswede, R., and Bezrukov, S.~L.}
	\newblock Edge isoperimetric theorems for integer point arrays.
	\newblock {\em Appl. Math. Lett. 8}, 2 (1995), 75--80.
	
	\bibitem{AhlswedeCai1997_II}
	{\scshape Ahlswede, R., and Cai, N.}
	\newblock General edge-isoperimetric inequalities. {II}. {A} local-global
	principle for lexicographical solutions.
	\newblock {\em European J. Combin. 18}, 5 (1997), 479--489.
	
	\bibitem{Bernstein1967}
	{\scshape Bernstein, A.~J.}
	\newblock Maximally connected arrays on the n-cube.
	\newblock {\em SIAM journal on applied mathematics 15}, 6 (1967), 1485--1489.
	
	\bibitem{Bezrukov1999}
	{\scshape Bezrukov, S.~L.}
	\newblock Edge isoperimetric problems on graphs.
	\newblock In {\em Graph Theory and Combinatorial Biology\/} (Budapest, 1999),
	L.~Lovász, A.~Gyárfás, G.~O.~H. Katona, A.~Recski, and L.~A. Székely,
	Eds., János Bolyai Mathematical Society, pp.~157--197.
	
	\bibitem{Bezrukov1999Equiv}
	{\scshape Bezrukov, S.~L.}
	\newblock On an equivalence in discrete extremal problems.
	\newblock {\em Discrete mathematics 203}, 1 (1999), 9--22.
	
	\bibitem{BezrukovBulatovicKuzmanovski2018}
	{\scshape Bezrukov, S.~L., Bulatovic, P., and Kuzmanovski, N.}
	\newblock New infinite family of regular edge-isoperimetric graphs.
	\newblock {\em Theoretical Computer Science 721\/} (2018), 42--53.
	
	\bibitem{BezrukovDasElsasser2000}
	{\scshape Bezrukov, S.~L., Das, S.~K., and Els{\"a}sser, R.}
	\newblock An edge-isoperimetric problem for powers of the {P}etersen graph.
	\newblock {\em Annals of Combinatorics 4\/} (2000), 153--169.
	
	\bibitem{BezrukovElsasser2003}
	{\scshape Bezrukov, S.~L., and Elsässer, R.}
	\newblock Edge-isoperimetric problems for cartesian powers of regular graphs.
	\newblock {\em Theoretical Computer Science 307}, 3 (2003), 473--492.
	\newblock Selected Papers in honor of Lawrence Harper.
	
	\bibitem{BezrukovKuzmanovskiLim2023}
	{\scshape Bezrukov, S.~L., Kuzmanovski, N., and Lim, J.}
	\newblock Pull-push method. a new approach to edge-isoperimetric problems.
	\newblock {\em Discrete Mathematics 346}, 12 (2023).
	
	\bibitem{BezrukovPiotrowskiPfaff2004}
	{\scshape Bezrukov, S.~L., Piotrowski, V.~P., and Pfaff, T.~J.}
	\newblock A new approach to {M}acaulay posets.
	\newblock {\em Journal of combinatorial theory. Series A 105}, 2 (2004),
	161--184.
	
	\bibitem{BollobasLeader1991_grid}
	{\scshape Bollob\'{a}s, B., and Leader, I.}
	\newblock Edge-isoperimetric inequalities in the grid.
	\newblock {\em Combinatorica 11}, 4 (1991), 299--314.
	
	\bibitem{BonnetSikora2016}
	{\scshape Bonnet, E., and Sikora, F.}
	\newblock A note on edge isoperimetric numbers and regular graphs.
	\newblock {\em International Journal of Foundations of Computer Science 27}, 06
	(2016), 771--774.
	
	\bibitem{Carlson2002}
	{\scshape Carlson, T.~A.}
	\newblock The edge-isoperimetric problem for discrete tori.
	\newblock {\em Discrete Mathematics 254\/} (2002), 33--49.
	
	\bibitem{Clements1971}
	{\scshape Clements, G.~F.}
	\newblock Sets of lattice points which contain a maximal number of edges.
	\newblock {\em Proceedings of the American Mathematical Society 27}, 1 (1971),
	13--15.
	
	\bibitem{EngelBook1997}
	{\scshape Engel, K.}
	\newblock {\em Sperner Theory}.
	\newblock Encyclopedia of Mathematics and its Applications. Cambridge
	University Press, 1997.
	
	\bibitem{HarperEIP1964}
	{\scshape Harper, L.~H.}
	\newblock Optimal assignments of numbers to vertices.
	\newblock {\em Journal of the Society for Industrial and Applied Mathematics
		12}, 1 (1964), 131--135.
	
	\bibitem{HarperBook2004}
	{\scshape Harper, L.~H.}
	\newblock {\em Global methods for combinatorial isoperimetric problems}.
	\newblock Cambridge studies in advanced mathematics ; 90. Cambridge University
	Press, Cambridge ;, 2004.
	
	\bibitem{HartEIP1976}
	{\scshape Hart, S.}
	\newblock A note on the edges of the n-cube.
	\newblock {\em Discrete Mathematics 14}, 2 (1976), 157--163.
	
	\bibitem{Katona1966}
	{\scshape Katona, G.}
	\newblock A theorem of finite sets.
	\newblock In {\em Theory on Graphs (Proceedings of the colloquium held at
		Tihany, Hungary September 1966)}, P.~Erdős, Ed. Academic Press, New York and
	London, 1968, pp.~381--401.
	
	\bibitem{KleitmanKriegerRothschild1971}
	{\scshape Kleitman, D.~J., Krieger, M.~M., and Rothschild, B.~L.}
	\newblock Configurations maximizing the number of pairs of hamming-adjacent
	lattice points.
	\newblock {\em Studies in Applied Mathematics 50}, 2 (1971), 115--119.
	
	\bibitem{Kruskal1963}
	{\scshape Kruskal, J.~B.}
	\newblock The number of simplices in a complex.
	\newblock In {\em Mathematical Optimization Techniques}, R.~Bellman, Ed.
	University of California Press, Berkeley and Los Angeles, 1963, p.~251–278.
	
	\bibitem{KuzmanovskiMacaulay2023}
	{\scshape Kuzmanovski, N.}
	\newblock Macaulay posets and rings.
	\newblock {\em arXiv:2307.05094\/} (2023).
	
	\bibitem{LindseyEIP1964}
	{\scshape Lindsey, J.}
	\newblock Assignment of numbers to vertices.
	\newblock {\em The American Mathematical Monthly 71}, 5 (1964), 508--516.
	
	\bibitem{Macaulay1927}
	{\scshape Macaulay, F.~S.}
	\newblock Some properties of enumeration in the theory of modular systems.
	\newblock {\em Proceedings of the London Mathematical Society s2-26}, 1 (1927),
	531--555.
	
	\bibitem{Mermin2006}
	{\scshape Mermin, J.}
	\newblock {\em Lexicographic Ideals}.
	\newblock PhD thesis, Cornell University, 2006.
	
	\bibitem{MerminMurai2010}
	{\scshape Mermin, J., and Murai, S.}
	\newblock Betti numbers of lex ideals over some {M}acaulay-lex rings.
	\newblock {\em Journal of Algebraic Combinatorics 31}, 2 (2010), 299--318.
	
	\bibitem{MerminPeeva2006}
	{\scshape Mermin, J., and Peeva, I.}
	\newblock Lexifying ideals.
	\newblock {\em Mathematical Research Letters 13}, 2-3 (2006), 409--422.
	
	\bibitem{MerminPeeva2007}
	{\scshape Mermin, J., and Peeva, I.}
	\newblock Hilbert functions and lex ideals.
	\newblock {\em Journal of Algebra 313}, 2 (2007), 642--656.
	
\end{thebibliography}

\end{document}